\documentclass[reqno,A4paper]{amsart}

\usepackage{amsmath}
\usepackage{amssymb}
\usepackage{amsthm}
\usepackage{enumerate}
\usepackage{mathrsfs} 
\usepackage{eqlist}
\usepackage{array}

\setlength{\textwidth}{150mm}
\setlength{\textheight}{206mm}
\setlength{\oddsidemargin}{5mm}
\setlength{\evensidemargin}{5mm}


\theoremstyle{plain}
\newtheorem{theorem}{Theorem}[section]
\newtheorem{prop}[theorem]{Proposition}
\newtheorem{lemma}{Lemma}[section]
\newtheorem{corol}{Corollary}[theorem]



\theoremstyle{definition}
\newtheorem{definition}{Definition}
\newtheorem{remark}{\textup{Remark}} 





\numberwithin{equation}{section}



\begin{document}

\title[ON THE RATIONAL PARAMETRIC SOLUTION OF DIAGONAL QUARTIC VARIETIES]%
{ON THE RATIONAL PARAMETRIC SOLUTION OF DIAGONAL QUARTIC VARIETIES}
\author[HASSAN SHABANI-SOLT \and AMIR SARLAK]%
{HASSAN SHABANI-SOLT* \and AMIR SARLAK**}

\newcommand{\acr}{\newline\indent}

\address{\llap{*\,}Department of Mathematics\acr
                   Urmia University\acr
                   Urmia\acr
                   IRAN}
\email{h.shabani.solt@gmail.com}

\address{\llap{**\,}School of Mathematics, Statistics and Computer Science, College of Science\acr
                    University of Tehran\acr
                    Tehran\acr
                    IRAN}
\email{sarlak$@$ut.ac.ir}



\subjclass[2010]{Primary 11D25, 11G05} 
\keywords{Parametric solution, Diophantine equations, Diagonal quartic varieties, Elliptic curves}

\begin{abstract}
In this paper we will exhibit a rational parametric solution for the Diophantine equations of diagonal quartic varieties. Our approach is based on utilizing the Calabi-Yau varieties including elliptic curves and diagonal quartic surfaces.
\end{abstract}

\maketitle

\section{Introduction}

Diagonal quartic varieties, are defined as a Diophantine equations in the form: 
\begin{equation}\label{equ1}
	X_4 \quad : \qquad \sum_{i=1}^{n} \alpha_i {x_i}^4=0 \qquad (\alpha_i \in \mathbb{Z}, \; \alpha_i \neq 0, \; n \ge 3)
\end{equation}
$X_4$ is a smooth algebraic variety in the projective space $\mathbb{P}^{n-1}$. In this paper by rational parametric solution of the diagonal quartic varieties of $X_4$, we mean a solution of this Diophantine equation as a rational function with coefficients of rational number field. Considering smooth hypersurface of $X \subseteq \mathbb{P}^{n}$, defined on the algebraically closed field of $k$ with characteristic zero and of the degree $d \ge 3$. It is shown that for the sufficiently great values of $n$, $X$ has an onto parametric solution on the field $k$ \cite{5}. In other words it is unirational, namely, for a positive integer $m$ it is dominated by $\mathbb{P}^{m}$. For the case $d=3$ the problem of unirationality has been solved, even for the rational number field \cite{7}. For the case $d=4$ and for all $n \ge 5$, even for the diagonal quartic varieties the problem of unirationality on the rational number field, is still open. But for $n=3$, $X_4$ is a smooth algebraic curve and it is of genus 3. Hence it can only have finitely many rational solutions and it does not have any rational parametric solution. And for $n=4$, we have the following diagonal quartic surface 
\begin{equation}\label{equ2}
	ax^4+by^4+cz^4+dw^4=0 
\end{equation}

Equation \ref{equ2} is a $K_3$ surface which is not unirational. However for some particular cases through considering specific conditions to the coefficients, the rational parametric solutions for equation \ref{equ2} exist. For instance the rational parametric solution using Richmond method \cite{8}, when 
the product of the coefficients $abcd$, is a perfect square. And the following equation
\begin{equation}
	x^4+ky^4=z^4+kw^4   
\end{equation}

for $k=1$ due to the classical result of Euler, the rational parametric solution exist. Moreover, for $k=4$ the rational parametric solution exist \cite{1}.
When $X_4$ is a symmetric diagonal varieties, that is: 
\begin{equation}
	X'_4 \quad : \qquad \sum_{i=1}^{m} \alpha_i {x_i}^4=\sum_{i=1}^{m} \alpha_i {y_i}^4
\end{equation}
It was demonstrated in \cite{2} that for $n= 2m \ge 6$, it is always possible to exhibit a rational parametric solution for $X'_4$.

In this paper using diagonal quartic surfaces and elliptic curves, we  study the rational parametric solution for some of asymmetric diagonal quartic varieties of $X_4$, and for the cases $n \ge 6$. We show that:\\ 

\begin{theorem}\label{th1}
	Considering the following diagonal quartic varieties of $X_4$ for $n=6$,
	\begin{equation}\label{equ16}
		x^4+y^4-z^4=2u^4+2v^4+2w^4
	\end{equation}
	\begin{equation}\label{equ17}
		x^4+y^4-z^4=u^4+2v^4+2w^4
	\end{equation}
	\begin{equation}\label{equ18}
		x^4+y^4+4z^4=u^4+2v^4+2w^4
	\end{equation}
	\begin{equation}\label{equ19}
		x^4+y^4+4z^4=4u^4+v^4-2w^4
	\end{equation}
	\begin{equation}\label{equ20}
		x^4+y^4-2z^4=2u^4+2v^4+2w^4
	\end{equation}
	the rational parametric solution for equation \ref{equ16} can be constructed in the form:
	\begin{equation*}
		\begin{split}
			x & = (\frac{\bar{x}}{\bar{y}})^4+2(a^4+4b^4) (\frac{\bar{x}}{\bar{y}})^2-(a^4+4b^4)^2\\
			y & = (\frac{\bar{x}}{\bar{y}})^4-2(a^4+4b^4) (\frac{\bar{x}}{\bar{y}})^2-(a^4+4b^4)^2\\
			z & =4ab(\frac{\bar{z} \bar{w} \bar{x}}{\bar{y}^3})\\
		\end{split}
		\qquad  \qquad
		\begin{split}
			u & = (\frac{\bar{x}}{\bar{y}})^4+(a^4+4b^4)^2\\
			v & =2a^2(\frac{\bar{z} \bar{w} \bar{x}}{\bar{y}^3})\\
			w & =4b^2(\frac{\bar{z} \bar{w} \bar{x}}{\bar{y}^3})\\
		\end{split}
	\end{equation*}
	and for equation \ref{equ17} the rational parametric solution can be constructed in the form:	
	\begin{equation*}
		\begin{split}
			x & = \bar{x}^2\\
			y & = (a^4+4b^4)\bar{y}^2\\
			z & = \bar{w}\bar{z}\\
		\end{split}
		\qquad  \qquad  
		\begin{split}
			u & = 2ab(\bar{x}\bar{y})\\
			v & = 2b^2(\bar{x}\bar{y})\\
			w & = a^2(\bar{x}\bar{y})\\
		\end{split}
	\end{equation*}
	In which we have: 
	\begin{equation*}
		\begin{split}
			\bar{x} & =a^8+24a^4b^4+16b^8\\
			\bar{y} & =4ab(4b^4-a^4)\\
			\bar{z} & =a^8+32a^2b^6-8a^4b^4+8a^6b^2+16b^8\\
			\bar{w} & =a^8-32a^2b^6-8a^4b^4-8a^6b^2+16b^8\\
		\end{split}
	\end{equation*}
	Applying the same approach for equations \ref{equ16} and \ref{equ17}, the rational parametric solution for the rest of the equations can be expressed analogously.
	
\end{theorem}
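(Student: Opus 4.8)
The plan is to treat all five equations uniformly by reducing each to the problem of exhibiting a single rational point, defined over $\QQ(a,b)$, on the genus-one quartic curve
\[
C_k:\qquad Y^2 = X^4 - k^2,\qquad k := a^4+4b^4 .
\]
First I would record the two elementary quartic identities that drive the collapse of the right-hand sides, namely
\[
(2ab)^4+2(2b^2)^4+2(a^2)^4 = 2k^2,\qquad (4ab)^4+2(2a^2)^4+2(4b^2)^4 = 32\,k^2,
\]
both immediate from $k^2=a^8+8a^4b^4+16b^8$. For \eqref{equ17} one sets $(u,v,w)=(2ab,2b^2,a^2)\,t$, so that $u^4+2v^4+2w^4=2k^2t^4$, reducing the problem to points $(x,y,z,t)$ on the diagonal quartic surface $x^4+y^4-z^4=2k^2t^4$. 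For \eqref{equ16} the terms $z,v,w$ are taken proportional to $(4ab,2a^2,4b^2)$, so that $z^4+2v^4+2w^4=32k^2t^4$.

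Second, I would reduce the surface to $C_k$ via the correct ansatz. For \eqref{equ17}, writing $x=\bar x^2,\ y=k\bar y^2,\ t=\bar x\bar y$ turns $x^4+y^4-2k^2t^4=z^4$ into $(\bar x^4-k^2\bar y^4)^2=z^4$, i.e. the demand $z^2=\bar x^4-k^2\bar y^4$; setting $X=\bar x/\bar y$, $Y=z/\bar y^2$ this is exactly $Y^2=X^4-k^2$. For \eqref{equ16} the key observation is that with $A=X^4-k^2$, $B=2kX^2$ one has $A^2+B^2=(X^4+k^2)^2$ and hence, putting $u=X^4+k^2$, $x=A+B$, $y=A-B$, the identity $x^4+y^4=2u^4+8A^2B^2$. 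The equation becomes $8A^2B^2=z^4+2v^4+2w^4=32k^2(XY)^4$ with $z=4ab\,XY$, $v=2a^2XY$, $w=4b^2XY$ (so $T=XY=\bar z\bar w\bar x/\bar y^3$); since $A=Y^2$ and $B=2kX^2$, this reads $32k^2X^4Y^4=32k^2(XY)^4$, an identity the moment $(X,Y)\in C_k$. Thus \emph{any} rational point of $C_k$ solves both equations.

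The heart of the argument is therefore the production of such a point over $\QQ(a,b)$. Here I would pass to a Weierstrass model: taking the point at infinity of $C_k$ as origin, the substitution $\xi=u^2-v$ in the affine coordinates of $C_k$ yields the elliptic curve
\[
E_k:\qquad Y^2 = X^3 + 4k^2X = X\bigl(X^2+(2k)^2\bigr),\qquad X=2\xi,\ Y=-2uX,
\]
on which $(0,0)$ is the only finite $2$-torsion. Using the representation $k=(a^2)^2+(2b^2)^2$ I would exhibit a non-torsion point of $E_k$ over $\QQ(a,b)$ (a section of the associated elliptic surface); the displayed $\bar x,\bar y,\bar z,\bar w$ encode the \emph{double} of that point. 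This is corroborated by a direct check: the image on $E_k$ of $(\bar x/\bar y,\ \bar z\bar w/\bar y^2)$ has $X$-coordinate
\[
2\!\left(\frac{\bar x^2-\bar z\bar w}{\bar y^2}\right)=\frac{16a^2b^2k^2}{m^2}=\left(\frac{4abk}{m}\right)^2,\qquad m:=a^4-4b^4,
\]
a perfect square, which is precisely the signature of a doubled point on $Y^2=X^3+4k^2X$.

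Once the point is in hand, everything reduces to the single polynomial identity $\bar x^4-k^2\bar y^4=(\bar z\bar w)^2$, equivalently $(\bar x/\bar y,\ \bar z\bar w/\bar y^2)\in C_k$. I would verify it by setting $m=a^4-4b^4$ and checking the closed forms $\bar x=2k^2-m^2$, $\bar y=-4ab\,m$, $\bar z\bar w=m^4+4k^2m^2-4k^4$ (the last from $\bar z\bar w=(a^4-4b^4)^4-64a^4b^4k^2$); with $s=k^2$, $r=m^2$ the identity then reads $(2s-r)^4-16sr^2(s-r)=(r^2+4sr-4s^2)^2$, both sides expanding to $16s^4-32s^3r+8s^2r^2+8sr^3+r^4$. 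Back-substitution through the two reductions yields the stated formulas for \eqref{equ16} and \eqref{equ17}. For \eqref{equ18}--\eqref{equ20} the same two-step scheme applies: each side collapses to a constant multiple of $k^2t^4$ through a companion quartic identity (the coefficient patterns are chosen exactly so such an identity holds), and the remaining side is parametrized by the same point of $C_k$, or of its quadratic twist $Y^2=k^2-X^4$ when a sign forces it. The main obstacle is the creative step, not the bookkeeping: constructing the non-torsion section of $E_k$ valid for all $a,b$; the reductions above and the final identity are then routine.
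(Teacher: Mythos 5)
Your proposal is correct and takes essentially the same route as the paper: the same collapsing identity $2(a^4+4b^4)^2=(2ab)^4+2(a^2)^4+2(2b^2)^4$ reduces the equations to the auxiliary surfaces $x^4+y^4=2z^4+2k^2w^4$ and $x^4+y^4=z^4+2k^2w^4$, which are then parametrized by the very same point $\bigl(\bar{x}/\bar{y},\ \bar{z}\bar{w}/\bar{y}^2\bigr)$ on $Y^2=X^4-k^2$ that the paper produces (packaged there as a witness that $k=a^4+4b^4$ is a congruent number, and routed through the Weierstrass model $V^2=U^3-k^2U$ rather than your $2$-isogenous model $Y^2=X^3+4k^2X$). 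Your direct verification of $\bar{x}^4-k^2\bar{y}^4=(\bar{z}\bar{w})^2$ via the substitution $s=k^2$, $r=m^2$ is a clean equivalent of the paper's computation in its Proposition on congruent numbers, so the argument is complete in the same sense and to the same extent as the paper's.
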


\begin{theorem}\label{th2}
	Considering the quartic Diophantine equation
	\begin{equation}\label{equ13}
		f(x,y)= \sum_{i=1}^{m} k_i {x_i}^4 
	\end{equation} 
	which $k_i s$ are rational numbers and $f(x,y)$ is one of the following expressions

	\begin{equation}\label{equ36}
		x^4-y^4, \qquad x^4+4y^4, \qquad 2x^4+2y^4,
	\end{equation}
	If equation \ref{equ13} has a rational parametric solution as bellow  
	\begin{equation}
		\begin{split}
			x &=a_1(t)\\
			y &=a_2(t)\\
			x_i &=b_i(t) \qquad (1 \le i \le m)
		\end{split}
	\end{equation}
	in which for any value of $t$, $a_1(t)$ and $a_2(t)$ are not zero, then there exist diagonal quartic varieties in the form of $X_4$ with $n=\frac{1}{2}(m^2+m+6)$ such that a rational parametric solution for them can be exhibited.

\end{theorem}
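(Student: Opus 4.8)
=== PROOF PROPOSAL ===

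The plan is to build the new variety by ``tensoring'' the given parametric solution of \eqref{equ13} with the three elementary identities in \eqref{equ36}. First I would observe that the three forms $x^4-y^4$, $x^4+4y^4$ and $2x^4+2y^4$ are precisely the binary quartics that appear on the right-hand sides of the base equations \eqref{equ16}--\eqref{equ20}, and each admits a Sophie-Germain–type or difference-of-squares factorization (e.g.\ $x^4+4y^4=(x^2+2xy+2y^2)(x^2-2xy+2y^2)$). So the first step is to take the hypothesized solution $x=a_1(t),\,y=a_2(t),\,x_i=b_i(t)$ and substitute it into the factorization of whichever form $f(x,y)$ we have chosen, producing the identity $\sum_i k_i b_i(t)^4 = g(t)\,h(t)$ where $g,h$ are the two quadratic factors, both nonzero because $a_1,a_2$ are nowhere zero.

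Second, I would form all products $b_i(t)\,b_j(t)$ with $1\le i\le j\le m$. There are exactly $\binom{m}{2}+m=\tfrac12(m^2+m)$ such monomials, and raising the base identity to a suitable power (or multiplying two copies of it) yields a quartic relation among these $\tfrac12(m^2+m)$ quantities whose coefficients are products $k_ik_j$. Adjoining the factored form contributes the remaining coordinates. Counting the new variables — the $\tfrac12(m^2+m)$ cross-products together with the fixed number of coordinates coming from the factorization of $f$ — I expect to land on exactly $n=\tfrac12(m^2+m+6)$, which pins down the $+6$ as the three pairs of factor-variables (two per form, mirroring the $u,v,w$ versus $x,y,z$ split seen in Theorem~\ref{th1}). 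The explicit substitutions in Theorem~\ref{th1}, where the $\bar x,\bar y,\bar z,\bar w$ feed into products like $\bar z\bar w\bar x/\bar y^3$, are exactly this mechanism in the case $m$ small, so I would use them as the template and verify the pattern generalizes.

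Third, I would write down the candidate diagonal quartic $X_4$ explicitly: its coefficients are the $k_ik_j$ from the squared base equation on one side and the coefficients coming from the factored binary quartic on the other, and its parametrizing coordinates are the $b_i b_j$ together with $g(t),h(t)$ and the $a_1,a_2$ data. I would then check by direct algebraic substitution that this tuple satisfies the quartic identically in $t$, the verification being forced by the base identity $\sum k_i b_i^4=gh$ squared and regrouped. The nonvanishing hypothesis on $a_1,a_2$ guarantees the parametrization is genuinely nonconstant and that no coordinate is identically zero, so the solution is a bona fide rational parametric solution rather than a degenerate point.

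The main obstacle will be the bookkeeping that forces the dimension count to come out to exactly $n=\tfrac12(m^2+m+6)$: one must confirm that the $\tfrac12(m^2+m)$ cross-products are algebraically independent enough to be treated as genuine separate coordinates of $X_4$ (not accidentally collapsing), and that the factorization of $f$ contributes precisely six further coordinates with no overlap. I expect the rest — verifying the quartic identity and the nonvanishing — to be routine once the correct grouping of monomials is fixed, so the real work is choosing the regrouping of $\bigl(\sum k_i b_i^4\bigr)^2$ into fourth powers of the $b_ib_j$ that matches the target diagonal shape.
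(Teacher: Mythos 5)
Your counting step --- squaring $\sum_{i=1}^{m}k_i b_i(t)^4$ so that the $\binom{m+1}{2}$ cross terms $k_ik_j\,(b_ib_j)^4$ become diagonal coordinates, plus a bounded number of extra coordinates, giving $n=\binom{m+1}{2}+3=\tfrac12(m^2+m+6)$ --- is exactly the paper's counting. But the engine you propose to drive it is not the paper's, and as described it does not work. The paper singles out the three forms in \eqref{equ36} because, by Proposition \ref{pro2}, $n=f(a_1(t),a_2(t))=\sum_i k_i b_i(t)^4$ is a \emph{congruent number} for every $t$; Propositions \ref{pro1}, \ref{pro5} and \ref{pro3} then convert a point on the elliptic curve $v^2=u^3-n^2u$ into a rational parametric solution of the surface $x^4+y^4=2z^4+2n^2w^4$, and it is this surface, with $2n^2w^4$ expanded as $\sum 2k_ik_j(b_ib_jw)^4$, that is the desired $X_4$. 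The three extra coordinates are the $x,y,z$ of that surface, supplied by the elliptic-curve point --- not by a factorization of $f$.

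Your substitute mechanism has a genuine gap at precisely this point. Substituting the parametric solution into the Sophie Germain or difference-of-squares factorization gives $\sum_i k_ib_i^4=g(t)h(t)$, and squaring gives $(gh)^2$, which is the square of a polynomial, not a signed sum of fourth powers of a fixed set of coordinates; the ``regrouping'' of it into a diagonal quartic is exactly the step you have not supplied. Meanwhile the identity $\bigl(\sum_i k_ib_i^4\bigr)^2=\sum_{i,j} k_ik_j(b_ib_j)^4$ is a tautology with the same expression on both sides, so by itself it defines no nontrivial variety: something external must be equated to $2n^2w^4$, and that something is the congruent-number parametrization of $x^4+y^4-2z^4$. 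You also miscount the extra coordinates: they number three, not six (the $+6$ sits inside the factor $\tfrac12$). To repair the argument you need Proposition \ref{pro2} together with Proposition \ref{pro3} (or Lemma \ref{lem1}); the factorization identities alone never produce the left-hand side $x^4+y^4-2z^4$.
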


\begin{theorem}\label{th3}
	For any integer $m \ge 3$,  there exist diagonal varieties in the form of $X_4$ with $n=2m^2-m+4$, which the rational parametric solution can be exhibited for them.
	
\end{theorem}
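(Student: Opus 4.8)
The plan is to use the multiplicative structure of the three forms in \eqref{equ36} as the engine, in the same spirit in which Theorem \ref{th2} uses their \emph{squares}. The starting observation is that each of $f\in\{x^4-y^4,\ x^4+4y^4,\ 2x^4+2y^4\}$ is a binomial in $x^4,y^4$, so the product $f(x,y)\,g(z,w)$ of any two of them expands into exactly $2\times 2=4$ monomials $x^{a}y^{b}z^{c}w^{d}$ with $a,b,c,d\in\{0,4\}$, each of which is a perfect fourth power. For example
\begin{equation*}
 (2x^4+2y^4)(2z^4+2w^4)=4\big[(xz)^4+(xw)^4+(yz)^4+(yw)^4\big],
\end{equation*}
and the other pairs behave identically. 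Thus a product of two forms is itself a $4$-term diagonal quartic.

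Next I would combine two solved seed equations of the type \eqref{equ13}. Suppose $f(x,y)=\sum_{i=1}^{M_1}k_i p_i^4$ and $g(z,w)=\sum_{j=1}^{M_2}l_j q_j^4$ each admit a rational parametric solution with $x,y,z,w$ nonvanishing. Multiplying the two identities gives
\begin{equation*}
 f(x,y)\,g(z,w)=\sum_{i=1}^{M_1}\sum_{j=1}^{M_2} k_i l_j\,(p_i q_j)^4,
\end{equation*}
whose right-hand side is a sum of $M_1M_2$ genuine fourth powers and whose left-hand side is the $4$-term quartic above. Transposing everything to one side produces a diagonal quartic variety $X_4$ with $M_1M_2+4$ nonzero terms, and substituting the two parametrisations yields a rational parametric solution (nontrivial, since the $f$- and $g$-values are nonzero). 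Since
\begin{equation*}
 2m^2-m+4 = 4 + m(2m-1),
\end{equation*}
it suffices to take seeds of sizes $M_1=m$ and $M_2=2m-1$; their product then realises the target value of $n$ for every $m\ge 3$ in a uniform way.

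The remaining, and genuinely delicate, point is to guarantee seed equations of both required sizes for every $m$ — equivalently, of every size $M\ge 3$ and of both parities, since $2m-1$ is odd while $m$ runs through all integers $\ge 3$. The ready-made sources cover only restricted sizes and one parity: Theorem \ref{th1} supplies seeds of size $4$ (rewrite, e.g., \eqref{equ16} as $2x^4+2y^4=2z^4+4u^4+4v^4+4w^4$), the symmetric construction of \cite{2} yields $f=x^4-y^4$ seeds of the even sizes $2k-2$, and squaring as in Theorem \ref{th2} jumps the size to $\binom{M+1}{2}+1$ — none of these alone covers all $M$. Here I would exploit the freedom in the coefficients $k_i$, which in Theorem \ref{th3} are ours to choose: by a Waring-type representation every polynomial $f\big(a_1(t),a_2(t)\big)$ can be written as a $\QQ$-linear combination of fourth powers of polynomials $b_i(t)$, and the number of summands can be prescribed, so seeds of every size $M\ge 3$ with all $k_i\neq 0$ and a genuinely $t$-dependent parametrisation exist.

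The hard part is therefore this seed-existence step and the bookkeeping it demands, rather than the product construction itself: one must produce seeds with exactly the prescribed number of nonzero terms, so the odd sizes $2m-1$ and the smallest cases $M=3,4$ — where the spanning step is tightest — are where I expect the real work to lie. Once these seeds are in hand, the product construction delivers, for each $m\ge 3$, a smooth diagonal quartic $X_4$ (smoothness being automatic once every coefficient is nonzero) with exactly $n=2m^2-m+4$ terms and an explicit rational parametric solution, which is the assertion of the theorem.
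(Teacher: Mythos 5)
Your route is genuinely different from the paper's. The paper proves this theorem by taking a single seed equation $f(x,y)=\sum_{i=1}^{M}k_ix_i^4$ with $f$ one of the three forms of \eqref{equ36}, choosing the $k_i$ so that the resulting $(M+2)$-variable equation is symmetric and hence parametrizable by Choudhry's result \cite{2}, and then feeding that seed into Theorem \ref{th2}, i.e.\ into the congruent-number/elliptic-curve squaring construction, which yields a variety with $\binom{M+1}{2}+3$ terms. You instead multiply two seeds, using only the fact that each admissible $f$ is a binomial in $x^4,y^4$, so that $f\cdot g$ contributes exactly four fourth-power terms against $M_1M_2$ on the other side. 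This is considerably more elementary (no elliptic curves at all), and your count $4+m(2m-1)=2m^2-m+4$ lands exactly on the polynomial in the statement; by contrast the paper's route, since symmetry forces $M$ even with $M\ge4$, produces $n=\tfrac12(M^2+M+6)\in\{13,24,39,\dots\}$, which does not visibly reproduce the stated values $19,32,49,\dots$, so your bookkeeping arguably fits the theorem better than the paper's own.

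The one real gap is the step you yourself flag: the existence of seeds of size $m$ and of size $2m-1$ for every $m\ge3$. You should not leave this to a vague Waring-type principle, but it is easy to close and much less delicate than you suggest. Lemma \ref{lem2} (or simply the observation that $f(t,1)\in\{t^4-1,\ t^4+4,\ 2t^4+2\}$ is already a two-term $\QQ$-combination of the fourth powers $t^4$ and $1^4$) gives a seed of size $2$ with nonvanishing $x,y$; and any seed of size $M$ yields one of size $M+1$ by splitting one term as $c\,b(t)^4=\tfrac{c}{2}b(t)^4+\tfrac{c}{2}b(t)^4$ --- after clearing denominators this is still a diagonal quartic with all coefficients nonzero, and the parametrisation merely repeats a coordinate, which the paper's definition of a rational parametric solution does not forbid. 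Hence seeds of every size $M\ge2$ and of both parities exist, and your product construction then delivers the theorem for all $m\ge3$. With that paragraph made explicit, your proof is complete.
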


\begin{theorem}\label{th4}
	Assume that there is a rational parametric solution for the Diophantine chain
	\begin{equation}\label{equ31}
		F_1(x_1,y_1)= F_2(x_2,y_2)= \dots = F_m(x_m,y_m)
	\end{equation}
	in which $F_i(x,y), \ (1 \le i \le m)$ is equal to one of the expressions:
	\[
	x^4+4y^4 \quad, \quad x^4-y^4 \quad, \quad 2x^4+2y^4
	\]
	Thus there is a rational parametric solution for the Diophantine chain
	\begin{equation}\label{equ32}
		\phi_1(x_1,y_1,z_1) =\phi_2(x_2,y_2,z_2)= \dots =\phi_m(x_m,y_m,z_m)
	\end{equation} 
	in which $\phi_i(x,y,z), \ (1 \le i \le m)$ is equal to one of the expressions:
	\begin{equation*}
		x^4+y^4-2z^4  \quad , \quad  x^4+y^4-z^4
	\end{equation*}
	
\end{theorem}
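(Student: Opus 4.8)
\emph{Proof strategy.} The plan is to prove Theorem~\ref{th4} by a block-by-block lifting of the given chain: for each index $i$ I will replace the binary block $F_i(x_i,y_i)$ by a ternary block $\phi_i(x_i,y_i,z_i)$ of one of the two admissible shapes $x^4+y^4-2z^4$ and $x^4+y^4-z^4$, arranged so that the common value of the $F$-chain is carried to a common value of the $\phi$-chain. Because every $\phi_i$ is homogeneous of degree $4$, I may rescale the three coordinates of each block by a rational factor at the end; so the task reduces to producing, for each of the three source forms separately, a rational identity realizing $F_i$ as an admissible ternary form whose value is common to the whole chain.

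The base case is the difference form. From $x^4-y^4=x^4+y^4-2y^4$ the form $F=x^4-y^4$ lifts to the shape $x^4+y^4-2z^4$ by setting $z=y$, giving $\phi_i=F_i$ and hence the common value $N$ of the $F$-chain. This pins the target: every block must be realized at the single value $N$. The companion identity $(x^4-y^4)^2=(x^2)^4+(y^2)^4-2(xy)^4$ records the same phenomenon at the level of squares and will guide the substitutions for the remaining forms.

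For the two sum forms I will use that each of them occurs as a specialization of the right-hand sides of equations \ref{equ17} and \ref{equ20}, whose rational parametric solutions are provided by Theorem~\ref{th1}. Indeed $u^4+2v^4+2w^4$ collapses to $x^4+4y^4$ under $(u,v,w)=(x,y,y)$, while $2u^4+2v^4+2w^4$ collapses to $2x^4+2y^4$ under $(u,v,w)=(x,y,0)$. Since equation \ref{equ17} equates the first sum to the shape $x^4+y^4-z^4$ and equation \ref{equ20} equates the second to the shape $x^4+y^4-2z^4$, the associated parametric solutions realize $x^4+4y^4$ and $2x^4+2y^4$ as admissible ternary forms at the value $N$, once the specializations are justified (see below); the underlying algebraic identity is the Sophie--Germain factorization $x^4+4y^4=(x^2+2y^2)^2-(2xy)^2$, equivalently $(x^4+4y^4)^2=(x^4-4y^4)^2+(2xy)^4$. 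Substituting the hypothesised functions $x_i(t),y_i(t)$ into these three liftings and applying the degree-$4$ rescaling then yields the required rational parametric solution of the chain \ref{equ32}.

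The hard part will be the two sum forms. They are positive definite, and their squares carry a positive cross term in $x^4y^4$ that no naive polynomial substitution can move onto the indefinite side: attempting to write $(2x^4+2y^4)^2$ as $A^4+B^4-2C^4$ with quadratic $A,B,C$ forces the irrational scaling $s^4=2$. This is precisely why the elliptic-curve and $K_3$ parametrizations behind equations \ref{equ16}--\ref{equ20} are needed. The delicate points are therefore to verify that the parametric solution of Theorem~\ref{th1} survives the required specializations of $(u,v,w)$ (in particular the degenerate choice $w=0$ used for $2x^4+2y^4$), and to carry out these specializations compatibly for all blocks at once, keeping the ratios of the block values rational fourth powers so that the equalizing rescalings, and hence the common value of the $\phi$-chain, stay in $\mathbb{Q}(t)$.
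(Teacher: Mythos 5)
There is a genuine gap, and it stems from a wrong choice of the quantity that is carried from the $F$-chain to the $\phi$-chain. The paper's proof does not try to realize the common value $N$ of \ref{equ31} itself as $x^4+y^4-z^4$ or $x^4+y^4-2z^4$; it realizes $2N^2$. The chain of ideas is: each of the three admissible forms $x^4+4y^4$, $2x^4+2y^4$, $x^4-y^4$ takes congruent-number values with explicit parametric witnesses (Proposition \ref{pro2}); for a congruent number $n$, Proposition \ref{pro3} and Lemma \ref{lem1} give parametric identities $2n^2=(x/w)^4+(y/w)^4-2(z/w)^4$ and $2n^2=(x/w)^4+(y/w)^4-(z/w)^4$ (equations \ref{equ33} and \ref{equ34}); substituting $n=F_i(x_i(t),y_i(t))=N(t)$ into whichever identity matches the prescribed shape of $\phi_i$ makes every block of \ref{equ32} equal to the common value $2N(t)^2$. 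Your base case $x^4-y^4=x^4+y^4-2y^4$ instead pins the target at $N$ itself, and --- as you yourself observe via the $s^4=2$ obstruction --- the two positive forms cannot be realized at the value $N$ by any identity in this paper; so your normalization is incompatible with the only available machinery, and the closing rescaling cannot repair it, since the ratio of block values it would have to absorb (e.g.\ $2N$) is not in general a fourth power in $\mathbb{Q}(t)$.

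The specific device you propose for the sum forms also fails. The parametric solutions of Theorem \ref{th1} are fixed functions of $(a,b)$: in the solution of \ref{equ17} one has $v=2b^2\bar{x}\bar{y}$ and $w=a^2\bar{x}\bar{y}$, so the specialization $v=w$ needed to collapse $u^4+2v^4+2w^4$ to $u^4+4v^4$ forces $a^2=2b^2$, which has no nonzero rational solutions; and setting $w=0$ in the solution of \ref{equ20} forces the elliptic-curve coordinate $V$ to vanish, degenerating the whole construction. More fundamentally, the direction is reversed: even a successful specialization of Theorem \ref{th1} would only exhibit one particular family on which some identity holds, whereas the theorem requires you to start from an arbitrary hypothesized parametric solution of \ref{equ31} and represent its prescribed common value. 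The correct move is to feed the given $x_i(t),y_i(t)$ into the congruent-number parametrizations of Proposition \ref{pro2}, pass through Remark \ref{rem1} to get a rational point on $V^2=U^3-n^2U$, and then apply Proposition \ref{pro3} or Lemma \ref{lem1} blockwise.
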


\section{Preliminaries}

In this part we will review some properties of elliptic curves and diagonal quartic surfaces.

\begin{definition}\label{def1}
	A positive integer $n$ is a congruent number if there exists a rational right angle triangle with area $n$.
\end{definition}

\begin{prop}\label{pro1}
	(\cite{6})The following statements are equivalent definitions for a positive integer n to be a congruent number.
	
	(i) There exist $x, y, z, t \in \mathbb{Z^+}$ satisfying the rationalized Diophantine equations 
	\begin{equation}\label{equ39}
		\begin{split}
			x^2+ny^2=z^2\\
			x^2-ny^2=t^2\\
		\end{split}
	\end{equation}
	
	(ii) There exist $x, y, z \in \mathbb{Z^+}$ satisfying the rationalized Diophantine equations 
	\begin{equation}\label{equ40}
		x^4-n^2y^4=z^2 
	\end{equation}
	
	(iii) The elliptic curve 
	\begin{equation}\label{equ41}
		Y^2=X^3-n^2X 
	\end{equation}
	has nontrivial solutions in rational numbers.
\end{prop}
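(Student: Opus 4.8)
The plan is to prove the three conditions equivalent to the defining property by closing a single cycle of implications
\[
\text{(congruent)} \Longrightarrow \text{(i)} \Longrightarrow \text{(ii)} \Longrightarrow \text{(iii)} \Longrightarrow \text{(congruent)},
\]
each arrow being an explicit algebraic substitution. The only genuinely creative ingredient is the classical rationalization that turns a right triangle into a point on $Y^2 = X^3 - n^2 X$; the other three arrows are one-line manipulations, and the bulk of the work goes into sign/denominator bookkeeping and the exclusion of degenerate cases.

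First I would treat (congruent) $\Rightarrow$ (i). Let $a,b,c \in \mathbb{Q}^{+}$ satisfy $a^2 + b^2 = c^2$ and $\tfrac12 ab = n$. The identities $(a+b)^2 = c^2 + 4n$ and $(a-b)^2 = c^2 - 4n$ show that $(c/2)^2 \pm n$ are both squares of rationals, where $a \ne b$ since $a=b$ would force the irrational $c = a\sqrt{2}$. Writing $c/2,\ (a+b)/2,\ |a-b|/2$ over a common denominator $y$ and setting $x = yc/2$, $z = y(a+b)/2$, $t = y|a-b|/2$ produces positive integers with $x^2 + ny^2 = z^2$ and $x^2 - ny^2 = t^2$, i.e. (\ref{equ39}).

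Next, (i) $\Rightarrow$ (ii) is immediate: multiplying the two equations of (\ref{equ39}) gives $x^4 - n^2 y^4 = (x^2+ny^2)(x^2-ny^2) = (zt)^2$, so $(x,y,zt)$ solves (\ref{equ40}). For (ii) $\Rightarrow$ (iii), given $z^2 = x^4 - n^2 y^4$ with $xyz \ne 0$ I set
\[
X = \frac{x^2}{y^2}, \qquad Y = \frac{xz}{y^3},
\]
and then
\[
Y^2 = \frac{x^2\,(x^4 - n^2 y^4)}{y^6} = \frac{x^6}{y^6} - n^2\,\frac{x^2}{y^2} = X^3 - n^2 X,
\]
a rational point of (\ref{equ41}) with $Y \ne 0$, which is exactly a nontrivial solution. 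The last and decisive arrow, (iii) $\Rightarrow$ (congruent), uses the triangle parametrization: given a rational point $(X,Y)$ with $Y \ne 0$ (the trivial points $\infty,\ (0,0),\ (\pm n,0)$ being precisely those with $Y=0$), put
\[
a = \Bigl|\frac{X^2 - n^2}{Y}\Bigr|, \qquad b = \Bigl|\frac{2nX}{Y}\Bigr|, \qquad c = \Bigl|\frac{X^2 + n^2}{Y}\Bigr|.
\]
A direct computation gives $a^2 + b^2 = (X^2+n^2)^2/Y^2 = c^2$ and $\tfrac12 ab = n(X^3 - n^2 X)/Y^2 = n$, the curve equation entering at the last equality; since $ab = 2n \ne 0$ all sides are nonzero, so $(a,b,c)$ is a genuine rational right triangle of area $n$.

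The step I expect to require the most care is not a computation but the bookkeeping threaded through the whole cycle. One must verify that the informal word ``nontrivial'' in (iii) means exactly $Y \ne 0$, so that the denominators in the triangle formulas are nonzero and the resulting triangle is nondegenerate; and one must check that the denominator-clearing in (congruent) $\Rightarrow$ (i) genuinely lands in $\mathbb{Z}^{+}$ rather than merely $\mathbb{Q}^{+}$, which is where the remark $a \ne b$ is needed to guarantee $t > 0$. Once these degeneracies are handled, the four statements are equivalent.
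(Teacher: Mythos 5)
Your proof is correct. Note, however, that the paper does not actually prove this proposition: it is quoted from the cited dissertation \cite{6}, and the only argument the paper supplies is Remark 2.1, which records precisely your two middle arrows --- multiplying the pair of equations in (i) to get $x^4-n^2y^4=(zt)^2$, and the substitution $X=x^2/y^2$, $Y=xz/y^3$ (the paper's $U=(x/y)^2$, $V=xzt/y^3$) landing on $V^2=U^3-n^2U$. What you add beyond the paper is the closing of the cycle: the elementary identities $(a\pm b)^2=c^2\pm 4n$ for (congruent) $\Rightarrow$ (i), and the inverse triangle parametrization $a=|X^2-n^2|/|Y|$, $b=|2nX|/|Y|$, $c=(X^2+n^2)/|Y|$ for (iii) $\Rightarrow$ (congruent), together with the correct identification of ``nontrivial'' with $Y\neq 0$ (the $2$-torsion being exactly the $Y=0$ points) --- a convention the paper uses implicitly in Proposition 2.3 when it takes $(u,v)$ with $v\neq 0$. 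Your bookkeeping is sound throughout; the only cosmetic slip is writing $\tfrac12 ab=n(X^3-n^2X)/Y^2$ without the absolute values that your definitions of $a$ and $b$ carry, but since $|X^3-n^2X|=Y^2$ the conclusion $\tfrac12 ab=n$ stands. In short, your argument is a complete, self-contained substitute for the external reference, and it is consistent with the fragments of it that the paper does display.
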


\begin{remark}\label{rem1}
	In Proposition \ref{pro1}, it can be noticed that if $(x,y,z,t)$ satisfies the rationalized Diophantine equations for a congruent number $n$, that is:
	\begin{equation*}
		\begin{split}
			x^2+ny^2 & =z^2\\
			x^2-ny^2 & =t^2\\
		\end{split}
	\end{equation*}
	through multiplying above equations we have
	\[
	x^4-n^2y^4=(zt)^2
	\]
	putting 
	\[
	X=x, \quad Y=y, \quad K=zt, 
	\]
	we will have 
	\begin{equation}
		X^4-n^2Y^4=K^2
	\end{equation}
	also by putting 
	\[
	U=(\frac{X}{Y})^2=(\frac{x}{y})^2 \qquad , \qquad
	V=\frac{KX}{Y^3}=\frac{xzt}{y^3}
	\]
	the following expression is obtained :
	\begin{equation}
		V^2=U^3-n^2U
	\end{equation}
	
\end{remark}

\begin{prop}\label{pro2}
	
	we have:
	
	$(i)$ The number $n=a^4+4b^4 \quad (a,b \in \mathbb{Z^{+}})$ is a congruent number.
	
	$(ii)$ The number $n=2a^4+2b^4 \quad (a,b \in \mathbb{Z^{+}})$ is a congruent number.
	
	$(iii)$ The number $n=a^4-b^4 \quad (a,b \in \mathbb{Z^{+}})$ is a congruent number.
	
\end{prop}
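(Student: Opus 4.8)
The plan is to prove all three statements by verifying the equivalent condition from Proposition~\ref{pro1}: it suffices, for each $n$, to produce either a rational right triangle of area $n$ (Definition~\ref{def1}) or a nontrivial rational point on the associated elliptic curve $V^2 = U^3 - n^2 U$ (Proposition~\ref{pro1}(iii)). The three cases have sharply different difficulty, and the difficulty is governed by how the underlying binary quartic factors over $\mathbb{Q}$.

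For (iii), I would write $n = a^4 - b^4 = (a^2)^2 - (b^2)^2$ and exploit that the product $a^2 \cdot b^2 = (ab)^2$ is a perfect square. Taking $s = a^2,\ t = b^2$ in the standard parametrization of Pythagorean triples and rescaling by $1/(ab)$ produces the rational right triangle with legs $\tfrac{a^4 - b^4}{ab}$ and $2ab$ and hypotenuse $\tfrac{a^4 + b^4}{ab}$, whose area is exactly $a^4 - b^4$. A one-line check of $\bigl(\tfrac{a^4-b^4}{ab}\bigr)^2 + (2ab)^2 = \bigl(\tfrac{a^4+b^4}{ab}\bigr)^2$ confirms it is a right triangle, and Definition~\ref{def1} closes the case.

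For (i), the key is the Sophie Germain factorization $a^4 + 4b^4 = (a^2 - 2ab + 2b^2)(a^2 + 2ab + 2b^2) = (a^2 + 2b^2)^2 - (2ab)^2$, together with the identity $(a^2 + 2b^2)^2 - 2(2ab)^2 = (a^2 - 2b^2)^2$. These suggest the point $U = -(2ab)^2$ on $V^2 = U^3 - n^2 U$: a direct substitution gives $U^3 - n^2 U = 4a^2b^2(a^4 - 4b^4)^2$, so $(U, V) = \bigl(-4a^2b^2,\ 2ab(a^4 - 4b^4)\bigr)$ lies on the curve. Since $V = 2ab(a^4 - 4b^4) \neq 0$ for positive integers (as $a^4 = 4b^4$ would force the irrational ratio $a/b = \sqrt{2}$), the point is none of the $2$-torsion points $(0,0),(\pm n,0)$, hence is a nontrivial rational point, and Proposition~\ref{pro1}(iii) gives that $n$ is congruent.

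I expect (ii) to be the main obstacle. The same strategy would look for a representation $2a^4 + 2b^4 = A^2 - B^2$ with $A, B$ quadratic forms (mirroring the constructions above), but $2a^4 + 2b^4 = 2(a^4 + b^4)$ admits no such factorization: the quartic $t^4 + 1$ is the eighth cyclotomic polynomial and is irreducible over $\mathbb{Q}$, so none of the low-degree devices used for (i) and (iii) are available. Consequently the required rational point on $V^2 = U^3 - (2a^4 + 2b^4)^2 U$ has genuinely higher degree in $a, b$, and must be produced by more work---for instance by a $2$-descent on the curve, or by solving $x^4 - n^2 y^4 = z^2$ with $x, y, z$ quartic (rather than quadratic) forms and passing back through Remark~\ref{rem1}. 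Locating this point explicitly and then confirming that it is not a torsion point is the step I expect to require the real effort.
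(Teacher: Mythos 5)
Your parts (i) and (iii) are correct, and both take a genuinely different (and leaner) route than the paper: the paper proves all three cases uniformly by exhibiting explicit degree-$8$ quadruples $(x,y,z,t)$ satisfying the system $x^2+ny^2=z^2$, $x^2-ny^2=t^2$ of Proposition \ref{pro1}(i), whereas you produce a rational right triangle directly for (iii) and a low-degree point on $V^2=U^3-n^2U$ for (i). One thing the paper's heavier computation buys is the explicit quadruple $(\bar{x},\bar{y},\bar{z},\bar{w})$, which is reused verbatim in the proof of Theorem \ref{th1}; for the proposition itself your arguments suffice.

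The genuine gap is part (ii), which you leave unproven, and your diagnosis of why it should be hard is mistaken. You look for a factorization $2a^4+2b^4=A^2-B^2$ and conclude from the irreducibility of $t^4+1$ that no low-degree device exists. But what is actually needed is $U(U-n)(U+n)=\square$, and the factors of $2$ can be distributed across the three terms rather than extracted from $a^4+b^4$ itself. Indeed the very same substitution $U=-4a^2b^2$ that you used in (i) works here: with $n=2a^4+2b^4$ one gets
\begin{equation*}
U+n=2(a^2-b^2)^2, \qquad U-n=-2(a^2+b^2)^2,
\end{equation*}
so that
\begin{equation*}
U(U-n)(U+n)=16a^2b^2(a^2-b^2)^2(a^2+b^2)^2=\bigl(4ab(a^4-b^4)\bigr)^2,
\end{equation*}
and $(U,V)=\bigl(-4a^2b^2,\ 4ab(a^4-b^4)\bigr)$ is a rational point with $V\neq 0$ whenever $a\neq b$. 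No descent or higher-degree forms are required. Note that the case $a=b$ genuinely fails ($V=0$ there, and $n=4a^4$ is equivalent to $1$, which is not congruent by Fermat), so part (ii) of the proposition is in fact false as stated without the hypothesis $a\neq b$; the paper's own construction degenerates there as well, once the apparent typo $y=2ab(4b^4-a^4)$ is corrected to $y=2ab(a^4-b^4)$.
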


\begin{proof}(i). Using Proposition \ref{pro1} part(i), It suffices to show that there exist $x,y \in \mathbb{Z^{+}}$ such that $x^2+ny^2$ and $x^2-ny^2$ are both squares. For $n=a^4+4b^4$ we have 
	\begin{equation}\label{equ4}
		x^2+ny^2= x^2+(a^4+4b^4)y^2\\
	\end{equation}
	\begin{equation}\label{equ5}
		x^2-ny^2= x^2-(a^4+4b^4)y^2\\
	\end{equation}
	let $x=a^8+24a^4b^4+16b^8$ and $y=4ab(4b^4-a^4)$ in equation \eqref{equ4}, as a result
	\begin{equation}\label{equ6}
		x^2+(a^4+4b^4)y^2= (a^8+24a^4b^4+16b^8)^2+16(a^4+4b^4)a^2b^2(4b^4-a^4)^2
	\end{equation}
	by factoring in equation \eqref{equ6}, it will result
	\begin{equation}
		x^2+(a^4+4b^4)y^2= (a^8+32a^2b^6-8a^4b^4+8a^6b^2+16b^8)^2
	\end{equation}
	similarly if the values $x=a^8+24a^4b^4+16b^8$ and $y=4ab(4b^4-a^4)$ are used in equation \eqref{equ5}, it will result 
	\begin{equation}
		x^2-(a^4+4b^4)y^2= (a^8-32a^2b^6-8a^4b^4-8a^6b^2+16b^8)^2
	\end{equation}
	let 
	\begin{equation}
		z= (a^8+32a^2b^6-8a^4b^4+8a^6b^2+16b^8)
	\end{equation}
	\begin{equation}
		t= (a^8-32a^2b^6-8a^4b^4-8a^6b^2+16b^8)
	\end{equation}
	Therefore, there exist $x, y, z, t \in \mathbb{Z^+}$ satisfying the rationalized Diophantine equations \ref{equ39}. Then according to Proposition \ref{pro1} part(i), $n=a^4+4b^4$ is a congruent number.
	
	(ii). The proof is exactly like the part (i). Using the same strategy, we will have the values 
	\begin{equation*}
		\begin{split}
			x & = a^8+6a^4b^4+b^8\\
			y & = 2ab(4b^4-a^4)\\
			z & = (a^8+4a^2b^6-2a^4b^4+4a^6b^2+b^8)\\
			t & = (a^8-4a^2b^6-2a^4b^4-4a^6b^2+b^8)\\
		\end{split}
	\end{equation*}
	Therefore, there exist $x, y, z, t \in \mathbb{Z^+}$ satisfying the rationalized Diophantine equations \ref{equ39}. Then according to Proposition \ref{pro1} part(i), $n=2a^4+2b^4$ is a congruent number.
	
	(iii). The proof is exactly like the part (i). Using the same strategy, we will have the values 
	\begin{equation*}
		\begin{split}
			x & = b^2(a^4+b^4)\\
			y & = 2ab^3\\
			z & = b^4(a^4+2a^2b^2-b^4)^2\\
			t & = b^4(a^4-2a^2b^2-b^4)\\
		\end{split}
	\end{equation*}
	Therefore, there exist $x, y, z, t \in \mathbb{Z^+}$ satisfying the rationalized Diophantine equations \ref{equ39}. Then according to Proposition \ref{pro1} part(i), $n=a^4-b^4$ is a congruent number.
\end{proof}

\begin{prop}\label{pro5}
	(\cite{4}, Proposition 6.5.6.) Let $n$ be a nonzero integer. The equation $x^4-y^4=nt^2$ has a solution with $xyt \ne 0$ if and only if $|n|$ is a congruent number. More precisely, if $x^4-y^4=nt^2$ with $xyt \ne 0$, then $v^2=u^3-n^2u$ with $(u,v)=(-ny^2/x^2,n^2yt/x^3)$, and conversely if $v^2=u^3-n^2u$ with $v \ne 0$, then $x^4-y^4=nt^2$, with $x=u^2+2nu-n^2$, $y=u^2-2nu-n^2$, and $t=4v(u^2+n^2)$.\\
\end{prop}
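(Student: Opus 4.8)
The statement is an equivalence packaged with explicit rational maps in both directions, so the plan is to verify each map by direct substitution and then use Proposition \ref{pro1}(iii) to pass between the quartic $x^4-y^4=nt^2$ and the congruent-number curve $v^2=u^3-n^2u$. Since the curve depends only on $n^2=|n|^2$, a nontrivial rational point on it will correspond to $|n|$ being congruent, and all that is needed is to match nondegenerate solutions of the quartic with such points.

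First I would treat the forward direction. Assuming $x^4-y^4=nt^2$ with $xyt\ne 0$, set $u=-ny^2/x^2$ and $v=n^2yt/x^3$, and compute $u^3-n^2u=u(u^2-n^2)$ outright. Substituting gives $u^2-n^2=n^2(y^4-x^4)/x^4=-n^2(x^4-y^4)/x^4=-n^3t^2/x^4$, so that $u(u^2-n^2)=n^4y^2t^2/x^6=v^2$. Hence $(u,v)$ lies on $v^2=u^3-n^2u$, and because $n,y,t\ne 0$ we get $v\ne 0$, i.e. a nontrivial rational point. By Proposition \ref{pro1}(iii) this forces $|n|$ to be a congruent number.

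For the converse, assume $v^2=u^3-n^2u$ with $v\ne 0$ and put $x=u^2+2nu-n^2$, $y=u^2-2nu-n^2$, $t=4v(u^2+n^2)$. Writing $A=u^2-n^2$ and $B=2nu$, so $x=A+B$ and $y=A-B$, the factorization collapses cleanly: $x-y=4nu$, $x+y=2(u^2-n^2)$, and $x^2+y^2=2(A^2+B^2)=2(u^2+n^2)^2$, where the last identity uses $(u^2-n^2)^2+4n^2u^2=(u^2+n^2)^2$. Therefore $x^4-y^4=(x-y)(x+y)(x^2+y^2)=16nu(u^2-n^2)(u^2+n^2)^2$, while $nt^2=16nv^2(u^2+n^2)^2$; substituting $v^2=u(u^2-n^2)$ makes the two agree, giving $x^4-y^4=nt^2$. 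For nondegeneracy, $u$ rational and $n\ne 0$ force $u^2\pm 2nu-n^2\ne 0$ (their roots are $\pm n\pm n\sqrt2$), so $x,y\ne 0$; and $t\ne 0$ since $v\ne 0$ and $u^2+n^2>0$.

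Combining the two directions with Proposition \ref{pro1}(iii) yields the equivalence: a solution with $xyt\ne 0$ exists exactly when $v^2=u^3-n^2u$ has a nontrivial rational point, i.e. exactly when $|n|$ is congruent. I do not expect a genuine obstacle; the only delicate points are the bookkeeping in the factorization of $x^4-y^4$ (the square $(u^2+n^2)^2$ appearing in $x^2+y^2$ is what drives the whole identity) and confirming the correspondence $xyt\ne 0\Leftrightarrow v\ne 0$, so that ``nontrivial rational point'' and ``solution with $xyt\ne 0$'' line up correctly.
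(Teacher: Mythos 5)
Your verification is correct: both substitutions check out (in the forward direction $u(u^2-n^2)=n^4y^2t^2/x^6=v^2$, and in the converse the factorization $x^4-y^4=(x-y)(x+y)(x^2+y^2)=16nu(u^2-n^2)(u^2+n^2)^2=16nv^2(u^2+n^2)^2=nt^2$ is exactly right), and your nondegeneracy argument via the irrationality of the roots $n(\pm 1\pm\sqrt{2})$ of $u^2\pm 2nu-n^2$ is sound. The paper itself gives no proof of this proposition --- it is imported verbatim as Proposition 6.5.6 of Cohen's book \cite{4} --- so your write-up is not an alternative to the paper's argument but a self-contained substitute for the citation. What your approach buys is exactly what one wants from such a verification: it makes explicit that the correspondence is a pair of rational maps between the quartic and the curve $v^2=u^3-n^2u$, that the curve depends only on $n^2$ (which is why $|n|$ rather than $n$ appears in the statement), and that the conditions $xyt\ne 0$ and $v\ne 0$ match up on both sides. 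The one point worth flagging is that you silently use that ``nontrivial rational solution'' in Proposition \ref{pro1}(iii) can be taken to mean a point with $v\ne 0$ (i.e.\ not one of the $2$-torsion points $(0,0)$, $(\pm n,0)$); this is standard for congruent-number curves but deserves a word, since your converse map genuinely requires $v\ne 0$ to produce $t\ne 0$.
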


\begin{prop}\label{pro3}
	Let $n$ be a congruent number and let $(u,v), \;  v \ne 0$ be a rational point on a elliptic curve $v^2=u^3-n^2u$. Then
	\begin{equation*}
		\begin{split}
			x_0 & =u^2+2nu-n^2\\
			y_0 & =u^2-2nu-n^2\\
		\end{split}
		\qquad \qquad
		\begin{split}
			z_0 & =u^2+n^2\\
			w_0 & =2v\\
		\end{split}
	\end{equation*}
	in which $x_0y_0z_0w_0 \ne 0$, is a rational solution for the diagonal quartic surface 
	\begin{equation}\label{equ21}
		x^4+y^4=2z^4+2n^2w^4
	\end{equation}
	
\end{prop}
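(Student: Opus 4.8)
The plan is to verify directly the single polynomial identity $x_0^4 + y_0^4 = 2 z_0^4 + 2 n^2 w_0^4$ after substituting the stated expressions and invoking the curve equation $v^2 = u^3 - n^2 u$; everything else is bookkeeping. To keep the computation transparent I would first introduce the abbreviations $A = u^2 - n^2$ and $B = 2nu$, so that $x_0 = A + B$ and $y_0 = A - B$. This symmetrization is the one genuinely useful move, since it lets me exploit the cancellation of odd-degree cross terms: the binomial expansions of $(A+B)^4$ and $(A-B)^4$ share their even parts and have opposite odd parts, whence
\begin{equation*}
x_0^4 + y_0^4 = 2\bigl(A^4 + 6 A^2 B^2 + B^4\bigr).
\end{equation*}

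For the right-hand side I would treat the two terms separately. First, I would record the Pythagorean-type relation
\begin{equation*}
z_0^2 = (u^2+n^2)^2 = (u^2-n^2)^2 + 4 n^2 u^2 = A^2 + B^2,
\end{equation*}
so that $z_0^4 = (A^2+B^2)^2 = A^4 + 2A^2B^2 + B^4$. Second, I would use the curve equation to collapse the $w_0$ term: since $w_0 = 2v$ and $v^2 = u^3 - n^2 u = u(u^2 - n^2) = uA$, we get $w_0^4 = 16 v^4 = 16 u^2 A^2$, and therefore
\begin{equation*}
2 n^2 w_0^4 = 32 n^2 u^2 A^2 = 8 A^2 B^2,
\end{equation*}
using $B^2 = 4 n^2 u^2$. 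Adding the two contributions gives $2 z_0^4 + 2 n^2 w_0^4 = 2A^4 + 12 A^2 B^2 + 2 B^4 = 2\bigl(A^4 + 6A^2B^2 + B^4\bigr)$, which matches the left-hand side exactly; this is the crux of the argument. As a sanity check this is consistent with Proposition \ref{pro5}, which records the companion identity $x_0^4 - y_0^4 = 16 n v^2 z_0^2$ (obtained from the same substitutions via $AB = 2nv^2$), so that the sum and difference fit together correctly.

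Finally I would dispose of the non-degeneracy claim $x_0 y_0 z_0 w_0 \ne 0$. Here $z_0 = u^2 + n^2 \neq 0$ because $n \neq 0$, and $w_0 = 2v \neq 0$ by the hypothesis $v \neq 0$. The factors $x_0$ and $y_0$ vanish only when $u^2 \pm 2nu - n^2 = 0$, i.e. when $u = (-1 \pm \sqrt 2)\,n$ or $u = (1 \pm \sqrt 2)\,n$; these are irrational for $n \neq 0$, so at a rational point both $x_0$ and $y_0$ are automatically nonzero. I do not anticipate any real obstacle, as the whole statement reduces to the displayed identity; the only point requiring a little care is to substitute the curve relation $v^2 = uA$ at the right moment so that the $w_0$ term collapses cleanly to $8A^2B^2$.
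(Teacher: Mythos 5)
Your proof is correct, but it takes a genuinely different and more direct route than the paper. The paper's argument is roundabout: it starts from the congruent-number property to produce \emph{some} solution of the system $z_0^2\pm nw_0^2=x_0^2,\,y_0^2$, sums and differences the two equations to land on $x^4+y^4=2z^4+2n^2w^4$ and on $x_0^4-y_0^4=nt_0^2$, then invokes Proposition \ref{pro5} (Cohen's explicit parametrization of $x^4-y^4=nt^2$ by points of $v^2=u^3-n^2u$) to identify $x_0,y_0,t_0$ with the stated formulas, and finally back-solves $z_0^2=(x_0^2+y_0^2)/2$ and $w_0^2=(x_0^2-y_0^2)/(2n)$ to recover $z_0=u^2+n^2$ and $w_0=2v$. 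You instead verify the single polynomial identity head-on via the substitution $A=u^2-n^2$, $B=2nu$, using only the curve relation $v^2=uA$; your computations ($x_0^4+y_0^4=2(A^4+6A^2B^2+B^4)$, $z_0^4=A^4+2A^2B^2+B^4$, $2n^2w_0^4=8A^2B^2$) all check out. What your approach buys is rigor and economy: it avoids the paper's implicit (and not fully justified) identification of the quadruple produced by the congruent-number hypothesis with the quadruple attached to the \emph{given} point $(u,v)$, it needs no appeal to Proposition \ref{pro5}, it sidesteps a sign slip in the paper's computation of $w_0^2$ (where $8nu(u^2+n^2)$ should read $8nu(u^2-n^2)$), and it actually proves the nonvanishing claim $x_0y_0z_0w_0\ne 0$ (via the irrationality of $(\pm1\pm\sqrt2)\,n$), which the paper merely asserts. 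What the paper's route buys is conceptual context --- it exhibits the surface as arising from the congruent-number system and connects to the cited literature --- but as a proof of the stated identity your direct verification is the cleaner one.
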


\begin{proof}
	$n$ is a congruent number thus using Proposition \ref{pro1} part(i), there exist $x_0, y_0, z_0, w_0 \in \mathbb{Z^+}$ satisfying the equations
	\begin{equation}\label{equ22}
		\begin{split}
			z_0^2+nw_0^2 & =x_0^2\\
			z_0^2-nw_0^2 & =y_0^2\\
		\end{split}
	\end{equation}
	we have: 
	\[
	(z_0^2+nw_0^2)^2+(z_0^2-nw_0^2)^2= 2z_0^4+2n^2w_0^4
	\]
	Therefore, $(x_0,y_0,z_0,w_0)$ is a solution for the equation $x^4+y^4=2z^4+2n^2w^4$. Again from equations \ref{equ22} we have:
	\[
	(z_0^2+nw_0^2)^2-(z_0^2-nw_0^2)^2= n(2z_0w_0)^2
	\]
	putting $t_0=2z_0w_0$, as a result $(x_0,y_0,t_0)$ is a solution for the Diophantine equation $x^4-y^4=nt^2$. In which $x_0y_0t_0 \ne 0$.\\
	Since $n$ is a congruent number by Proposition \ref{pro1} part(iii), the equation $v^2=u^3-n^2u$ has a solution $(u,v)$ which $v \ne 0$. Thus by Proposition \ref{pro5} 
	\begin{equation}\label{equ37}
		\begin{split}
			x_0 & =u^2+2nu-n^2\\
			y_0 & =u^2-2nu-n^2\\
			t_0 & =4v(u^2+n^2)\\
		\end{split}
	\end{equation}
	is a solution for the equation $x^4-y^4=nt^2$. Using equations \ref{equ37} we have
	\begin{equation*}
		\begin{split}
			z_0^2 & = \frac{x_0^2+y_0^2}{2} = \frac{(u^2+2nu-n^2)^2+(u^2-2nu-n^2)^2}{2} =(u^2+n^2)^2\\
			w_0^2 & = \frac{x_0^2-y_0^2}{2n} = \frac{8nu(u^2+n^2)}{2n}=4u(u^2+n^2)= 4v^2\\
		\end{split}
	\end{equation*}
	then $z_0=u^2+n^2$ and $w_0=2v$. The rational solution of equation \ref{equ21} is:
	\begin{equation*}
		\begin{split}
			x_0 & =u^2+2nu-n^2\\
			y_0 & =u^2-2nu-n^2\\
		\end{split}
		\qquad \qquad
		\begin{split}
			z_0 & =u^2+n^2 \\
			w_0 & =2v \\
		\end{split}
	\end{equation*}
	
\end{proof}

\begin{lemma}\label{lem1}
	Let $n$ be a congruent number. The following diagonal quartic surface
	\begin{equation}\label{equ38}
		x^4+y^4=z^4+2n^2w^4
	\end{equation}
	has a rational parametric solution in the form:
	\begin{equation*}
		\begin{split}
			x &=X^2\\
			y &=nY^2\\
		\end{split}
		\qquad \qquad
		\begin{split}
			z &=K\\
			w &=XY\\
		\end{split}
	\end{equation*}
	
\end{lemma}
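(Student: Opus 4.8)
The plan is to read the proposed substitution backwards: rather than guess it, I would plug it into \eqref{equ38} and see which relation among $X$, $Y$, $K$ is forced, then supply that relation from the hypothesis that $n$ is congruent. Substituting $x = X^2$, $y = nY^2$, $z = K$, $w = XY$ into $x^4 + y^4 = z^4 + 2n^2 w^4$ turns the left-hand side into $X^8 + n^4 Y^8$ and the right-hand side into $K^4 + 2n^2 X^4 Y^4$. Hence the equation holds for these values exactly when
\[
X^8 - 2n^2 X^4 Y^4 + n^4 Y^8 = K^4 .
\]

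The key observation is that the left-hand side is a perfect square, $(X^4 - n^2 Y^4)^2$, so the condition is equivalent to $(X^4 - n^2 Y^4)^2 = K^4$ and is in particular met whenever
\[
X^4 - n^2 Y^4 = K^2 .
\]
This collapses the entire problem onto the single Diophantine relation $X^4 - n^2 Y^4 = K^2$ with $X, Y \neq 0$.

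It remains only to produce such a triple, and here I would invoke the congruence hypothesis through Remark \ref{rem1} (equivalently Proposition \ref{pro1}(ii)). Since $n$ is a congruent number, the equation $X^4 - n^2 Y^4 = K^2$ has nontrivial rational solutions, and Remark \ref{rem1} identifies them with rational points $(U,V)$, $V \neq 0$, on the elliptic curve $V^2 = U^3 - n^2 U$ through $U = (X/Y)^2$ and $V = KX/Y^3$. As $(U,V)$ runs over these points one obtains a family of admissible triples $(X, Y, K)$, whence the quadruples $(x,y,z,w) = (X^2, nY^2, K, XY)$ furnish the asserted rational parametric solution of \eqref{equ38}; for the explicit congruent numbers of Proposition \ref{pro2} the triple $(X,Y,K)$ is even given directly as polynomials in $a,b$. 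I expect the substitution and the reduction to be purely mechanical; the only genuine content is spotting the factorization $X^8 - 2n^2 X^4 Y^4 + n^4 Y^8 = (X^4 - n^2 Y^4)^2$ and recognizing, via Remark \ref{rem1}, that the resulting relation $X^4 - n^2 Y^4 = K^2$ is precisely the congruent-number characterization, so there is no real obstacle beyond this identification.
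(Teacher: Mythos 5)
Your argument is correct and is essentially the paper's own proof run in reverse: the paper starts from $X^4-n^2Y^4=K^2$ (supplied by Proposition \ref{pro1}(ii)) and squares it to reach $(X^2)^4+(nY^2)^4=K^4+2n^2X^4Y^4$, whereas you substitute first and then recognize the same identity $(X^4-n^2Y^4)^2=X^8-2n^2X^4Y^4+n^4Y^8$ as the condition to be met. The key factorization and the appeal to the congruent-number characterization are identical, so there is no substantive difference.
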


\begin{proof}
	Since $n$ is a congruent number using Proposition \ref{pro1} part $(ii)$, there are positive integers $X$ , $Y$ and $K$ such that
	\[
	X^4-n^2Y^4=K^2
	\]
	by squaring the above equation we have
	\[
	(X^2)^4-2n^2X^4Y^4+(nY^2)^4=K^4
	\]
	or
	\[
	(X^2)^4+(nY^2)^4=K^4+2n^2X^4Y^4
	\]
	so it is evident that
	\begin{equation*}
		\begin{split}
			x &=X^2\\
			y &=nY^2\\
		\end{split}
		\qquad \qquad
		\begin{split}
			z &=K\\
			w &=XY\\
		\end{split}
	\end{equation*}
	is the rational parametric solution for equation \ref{equ38}.\\ 
\end{proof}

\begin{lemma}\label{lem2}
	There is a rational parametric solution for the following diagonal quartic surface. 
	\begin{equation}\label{equ28}
		sx^4+\frac{4}{s}y^4=z^4-w^4
	\end{equation}
	In which $s$ is a nonzero rational parameter.	
	
\end{lemma}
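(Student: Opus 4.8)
The plan is to turn \eqref{equ28} around and solve for $s$, exploiting that the two left-hand coefficients satisfy $s\cdot\frac4s=4$, a perfect square. Multiplying \eqref{equ28} by $s$ gives the quadratic
\[
s^2x^4-(z^4-w^4)\,s+4y^4=0 ,
\]
whose discriminant is $\Delta=(z^4-w^4)^2-(4x^2y^2)^2$. Hence \eqref{equ28} has a rational solution exactly when $\Delta$ is a rational square, and the whole problem reduces to producing quadruples $(x,y,z,w)$ for which $z^4-w^4$ is the hypotenuse of a Pythagorean triple with leg $4x^2y^2$.

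To make $\Delta$ a square I would impose the single relation
\[
z^4-w^4=\frac{2x^2y^2(m^2+1)}{m},
\]
introducing a free rational parameter $m$; a direct check then gives $\Delta=\bigl(\tfrac{2x^2y^2(m^2-1)}{m}\bigr)^2$ and, taking the larger root of the quadratic, $s=\dfrac{2my^2}{x^2}$. One verifies immediately that with this $s$ one has $sx^4=2mx^2y^2$ and $\frac4s y^4=\frac{2x^2y^2}{m}$, whose sum is indeed $z^4-w^4$; so every such quadruple yields a genuine point of \eqref{equ28}, with $s$ appearing as the announced rational parameter.

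It remains to realize the relation $z^4-w^4=\frac{2x^2y^2(m^2+1)}{m}$ by an actual difference of two fourth powers, and this is where the real work lies. Writing the right-hand side as $n\,t^2$ with $t^2$ absorbing the square factor $x^2y^2$, Proposition \ref{pro5} guarantees a solution $z=u^2+2nu-n^2$, $w=u^2-2nu-n^2$ coming from a rational point of $v^2=u^3-n^2u$ precisely when $n$, the squarefree part of $2m(m^2+1)$, is a congruent number. The main obstacle is therefore to choose the parameter $m$ so that $2m(m^2+1)$ falls into one of the congruent families $a^4+4b^4,\ 2a^4+2b^4,\ a^4-b^4$ of Proposition \ref{pro2} while $s=2my^2/x^2$ still ranges over the desired rationals; this is feasible exactly because the coefficient product $s\cdot\frac4s=4$ is a square, which is the same Richmond-type condition recalled in the Introduction \cite{8}. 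Once $m$ is fixed and the elliptic curve supplies $(u,v)$, back-substitution delivers the rational parametric solution of \eqref{equ28}.
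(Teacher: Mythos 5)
Your reduction of \eqref{equ28} to a quadratic in $s$ is algebraically correct as far as it goes: with $z^4-w^4=\tfrac{2x^2y^2(m^2+1)}{m}$ the discriminant is indeed $\bigl(\tfrac{2x^2y^2(m^2-1)}{m}\bigr)^2$ and the larger root is $s=2my^2/x^2$. But this inverts the quantifiers of the lemma. The statement (and the way it is used in Remark \ref{rem2}, where $s$ is specialized to $1$ and $2$) requires that for an \emph{arbitrary prescribed} nonzero rational $s$ one produce a one-parameter family of solutions $(x,y,z,w)$; your construction instead produces a value of $s$ determined by the data $(m,x,y)$, and the step that would close the gap --- choosing $m$ (and $x,y$) so that the relevant squarefree part of $2m(m^2+1)$ lies in one of the congruent families of Proposition \ref{pro2} \emph{while} $s=2my^2/x^2$ equals the given value and the construction still carries a free parameter --- is only asserted, not proved. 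The appeal to ``the Richmond-type condition $s\cdot\tfrac{4}{s}=4$ being a square'' is a non sequitur here: Richmond's criterion concerns rational points on $ax^4+by^4+cz^4+dw^4=0$ and does not tell you that $2m(m^2+1)$ can be forced into a congruent family. There is a further mismatch even for a single admissible $m$: Proposition \ref{pro5} hands you $z,w$ together with a \emph{specific} $t=4v(u^2+n^2)$ satisfying $z^4-w^4=nt^2$, so you cannot freely ``absorb the square factor $x^2y^2$ into $t^2$''; you would have to solve $t=(\text{square part})\cdot xy$ afterwards, which again pins down $y/x$ and hence $s$.

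The paper avoids all of this with an elementary identity: from $(a+b)^4-(a-b)^4=8a^3b+8ab^3$ with $a=t^4/s$ and $b=2$ one gets $8a^3b+8ab^3=s\bigl(\tfrac{2t^3}{s}\bigr)^4+\tfrac{4}{s}(2t)^4$, so that $x=\tfrac{2t^3}{s}$, $y=2t$, $z=\tfrac{t^4}{s}+2$, $w=\tfrac{t^4}{s}-2$ is an explicit parametric solution valid for every nonzero rational $s$, with no congruent-number or elliptic-curve input at all. If you want to salvage your approach you must exhibit, for each given $s$, an explicit parametric choice of $m$, $x$, $y$ meeting all of your constraints simultaneously; as written the proposal does not prove the lemma.
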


\begin{proof}
	Considering the identity : 
	\begin{equation}\label{equ30}
		(a+b)^4-(a-b)^4=8a^3b+8ab^3
	\end{equation}
	putting
	\begin{equation*}
		a= \frac{1}{s} t^4 \qquad, \qquad b=2
	\end{equation*}
	Then
	\begin{equation*}
		\begin{split}
			8a^3b+8ab^3 & = 8 (\frac{t^4}{s})^3 \times 2 + 8 (\frac{t^4}{s}) \times 2^3\\
			& = s (\frac{2t^3}{s})^4 + \frac{4}{s} (2t)^4\\
		\end{split}
	\end{equation*}
	Therefore 
	\begin{equation*}
		\begin{split}
			x & =\frac{2t^3}{s}\\
			y & = 2t\\
		\end{split}
		\qquad \qquad
		\begin{split}
			z & = a+b = \frac{t^4}{s} + 2 \\
			w & = a-b = \frac{t^4}{s} - 2 \\
		\end{split}
	\end{equation*}\\
	is a rational parametric solution for diagonal quartic surface \ref{equ28}.
\end{proof}

\begin{remark}\label{rem2}
	We are interested in two specific cases of equation \ref{equ28}, as the parameter $s$ takes the values of 1 and 2. 
	For the case $s=1$, the equation is
	\begin{equation}\label{equ29}
		x^4+4y^4=z^4-w^4
	\end{equation}
	and the rational parametric solution for this equation is
	\[
	x=2t^3 \quad, \quad y=2t \quad, \quad z=t^4+2 \quad, \quad w=t^4-2
	\]
	And for the case $s=2$, the equation is
	\begin{equation}\label{equ35}
		2x^4+2y^4=z^4-w^4
	\end{equation}
	and the rational parametric solution for this equation is 
	\[
	x=t^3 \quad, \quad y=2t \quad, \quad z=\frac{t^4}{2}+2 \quad, \quad w=\frac{t^4}{2}-2
	\]
\end{remark}

\section{Proof of the Theorems}

\begin{proof}[Proof of theorem 1.1.]
	Proof for equation \ref{equ16} and equation \ref{equ17} are presented. For the rest of the equations the proof is in the same fashion.\\
	
	\underline{Equation \ref{equ16}}. According to the Proposition \ref{pro2} part $(i)$, $n = a^4+4b^4$ is a congruent number. Hence from Proposition \ref{pro1} part $(i)$, the $(\bar{x},\bar{y},\bar{z},\bar{w})$ satisfies the equation 
	\begin{equation*}
		\bar{x}^2+n\bar{y}^2=\bar{z}^2
	\end{equation*}
	\begin{equation*}
		\bar{x}^2-n\bar{y}^2=\bar{w}^2
	\end{equation*}
	in which by Proposition \ref{pro2} part $(i)$,
	\begin{equation}\label{18}
		\begin{split}
			\bar{x} & =a^8+24a^4 b^4+16 b^4\\
			\bar{y} & = 4ab(4b^4-a^4)\\
			\bar{z} & = a^8+32a^2b^6-8a^4b^4+8a^6b^2+16b^8\\
			\bar{w} & = a^8-32a^2b^6-8a^4b^4-8a^6b^2+16b^8\\
		\end{split}
	\end{equation}
	But from Remark \ref{rem1} 
	\begin{equation}\label{equ26}
		U=(\frac{\bar{x}}{\bar{y}})^2, \qquad V=\frac{\bar{x}\bar{z}\bar{w}}{\bar{y}^3}
	\end{equation}
	satisfies the elliptic curve equation $V^2=U^3-n^2U$.
	Thus according to the Proposition \ref{pro3} the values 
	\begin{equation}
		\begin{split}\label{19}
			x_1 & =U^2+2nU-n^2\\
			y_1 & =U^2-2nU-n^2\\
			z_1 & =U^2+n^2\\
			w_1 & =2V
		\end{split}
	\end{equation}
	is the solution of the equation   
	\begin{equation}\label{equ3}
		x^4+y^4=2z^4+2n^2w^4
	\end{equation}
	substituting $n=a^4+4b^4$ in equation \eqref{equ3}, we have
	\begin{equation*}
		x_1^4+y_1^4=2z_1^4+2(a^4+4b^4)^2w_1^4
	\end{equation*}
	through slight computation
	\begin{equation*}
		x_1^4+y_1^4-(2abw_1)^4=2z_1^4+2(a^2w_1)^4+2(2b^2w_1)^4
	\end{equation*}
	Therefore,
	\begin{equation*}
		\begin{split}
			x &=x_1 \\
			y &=y_1 \\
			z &=2abw_1 \\
		\end{split}
		\qquad \qquad
		\begin{split}
			u &=z_1 \\	
			v &=a^2w_1 \\
			w &=2b^2w_1\\
		\end{split}
	\end{equation*}
	is a solution for the following equation:
	\[
	x^4+y^4-z^4=2u^4+2v^4+2w^4
	\]
	Finally by substituting the values of the equations \ref{equ26} in \ref{19}, the rational parametric solution of the equation \ref{equ16} is obtained.
	\begin{equation*}
		\begin{split}
			x & =x_1= (\frac{\bar{x}}{\bar{y}})^4+2(a^4+4b^4)(\frac{\bar{x}}{\bar{y}})^2-(a^4+4b^4)^2\\
			y & =y_1= (\frac{\bar{x}}{\bar{y}})^4-2(a^4+4b^4)(\frac{\bar{x}}{\bar{y}})^2-(a^4+4b^4)^2\\
			z & = 2abw_1=4ab(\frac{\bar{z}\bar{w}\bar{x}}{\bar{y}^3})\\
			u & =z_1= (\frac{\bar{x}}{\bar{y}})^4+(a^4+4b^4)^2\\
			v & =a^2w_1= 2a^2(\frac{\bar{z}\bar{w}\bar{x}}{\bar{y}^3})\\
			w & =2b^2w_1= 4b^2(\frac{\bar{z}\bar{w}\bar{x}}{\bar{y}^3})
		\end{split}
	\end{equation*}
	It is clear that by using the values of $\bar{x}$, $\bar{y}$, $\bar{z}$ and $\bar{w}$ from equation \eqref{18} the rational parametric solution in terms of $a$ and $b$ is obtained.\\
	
	\underline{Equation \ref{equ17}}. Using Proposition \ref{pro2} part $(i)$, $n = a^4+4b^4$ is a congruent number. Thus based on the Proposition \ref{pro1} part $(i)$, there exist positive integers $\bar{x}, \bar{y}, \bar{z}, \bar{w}$ such that $(\bar{x},\bar{y},\bar{z},\bar{w})$ satisfies the following equations
	\begin{equation*}
		\begin{split}
			\bar{x}^2+n\bar{y}^2=\bar{z}^2\\
			\bar{x}^2-n\bar{y}^2=\bar{w}^2\\
		\end{split}
	\end{equation*}
	in which by Proposition \ref{pro2} part $(i)$,
	\begin{equation*}
		\begin{split}
			\bar{x} & =a^8+24a^4 b^4+16 b^4\\
			\bar{y} & = 4ab(4b^4-a^4)\\
			\bar{z} & = a^8+32a^2b^6-8a^4b^4+8a^6b^2+16b^8\\
			\bar{w} & = a^8-32a^2b^6-8a^4b^4-8a^6b^2+16b^8\\
		\end{split}
	\end{equation*}
	From Remark \ref{rem1}, by putting
	\begin{equation}\label{equ27}
		X= \bar{x}, \quad Y= \bar{y}, \quad K= \bar{z} \bar{w},
	\end{equation}
	the following equation holds:
	\[
	X^4-n^2Y^4=K^2
	\]
	Thus using Lemma \ref{lem1} 
	\begin{equation*}
		\begin{split}
			x & = X^2\\
			y & = nY^2\\
		\end{split}
		\qquad \qquad
		\begin{split}
			z & = K\\
			t & = XY\\
		\end{split}
	\end{equation*}
	is the solution of the following equation
	\[
	x^4+y^4=z^4+2n^2t^4
	\]
	substituting $n=a^4+4b^4$, we have:
	\[
	x^4+y^4=z^4+2(a^4+4b^4)^2t^4
	\]
	by slight computation
	\[
	x^4+y^4-z^4=(2abt)^4+2(2b^2t)^4+2(a^2t)^4
	\]
	Therefore 
	\begin{equation*}
		\begin{split}
			x & = X^2\\
			y & = (a^4+4b^4)Y^2\\
			z & = K\\
		\end{split}
		\qquad \qquad
		\begin{split}
			u & = 2abt=2abXY\\
			v & = 2b^2t=2b^2XY\\
			w & = a^2t=a^2XY\\
		\end{split}
	\end{equation*}
	is the solution of equation \ref{equ17}. Using the equations \ref{equ27}, we have:
	\begin{equation*}
		\begin{split}
			x & = \bar{x}^2\\
			y & = (a^4+4b^4)\bar{y}^2\\
			z & = \bar{w}\bar{z}\\
		\end{split}
		\qquad \qquad
		\begin{split}
			u & = 2ab(\bar{x}\bar{y})\\
			v & = 2b^2(\bar{x}\bar{y})\\
			w & = a^2(\bar{x}\bar{y})\\
		\end{split}
	\end{equation*}
	Which is the solution of the following equation: 
	\[
	x^4+y^4-z^4=u^4+2v^4+2w^4
	\]
	It is clear that through substituting the values of $\bar{x}$, $\bar{y}$, $\bar{z}$ and $\bar{w}$ from equation \eqref{18}, the rational parametric solution in terms of $a$ and $b$ is obtained.
\end{proof}

\begin{proof}[Proof of theorem 1.2.]
	Based on the assumption of the theorem, equation \eqref{equ13} has a rational parametric solution. Through slight computation this rational parametric solution can be written in the form of polynomials with integer coefficients. Since for each parameter $t$ the value $f(a_1(t),a_2(t))$ is a congruent number, hence $\sum_{i=1}^{m} k_i {(b_i(t))}^4$ will also be a congruent number. Applying the same strategy presented in the theorem \ref{th1}, we can exhibit a rational parametric solution for the following equation
	\begin{equation}\label{equ14}
		x^4+y^4=2z^4+2(\sum_{i=1}^{m} k_i {(b_i(t))}^4)^2w^4
	\end{equation}
	Since $(\sum_{i=1}^{m} k_i {(b_i(t))}^4)^2$ has $\binom{m+1}{2}$ terms, therefore the equation \eqref{equ14} contains $\binom{m+1}{2}+3$ terms. This will complete the proof:
	\begin{equation*}
		\binom{m+1}{2}+3 = \frac{1}{2}(m^2+m+6)
	\end{equation*}
	
\end{proof}
\begin{remark}
	Implementing Theorem \ref{th2} for the case $m=2$, we are able to parameterize numerous equations in addition to the equations mentioned in Theorem \ref{th1}. For instance using Lemma \ref{lem2}, lets consider the rational parametric solution of the family of degree 4 surfaces as below:
	\[
	a^4-b^4=sc^4+ \frac{4}{s}\;d^4
	\] 
	which are 
	\begin{equation*}
		\begin{split}
			a & = \frac{1}{s}\;t^4+2 \\
			b & = \frac{1}{s}\;t^4-2 \\
		\end{split}
		\qquad \qquad
		\begin{split}
			c & = \frac{2}{s}\;t^3 \\
			d & = 2t \\
		\end{split}
	\end{equation*}
	applying exactly the approach mentioned in the proof of Theorem \ref{th2}, we can easily obtain a rational parametric solution for the family of following Diophantine equations:
	\begin{equation*}
		x^4+y^4-s^2z^4=2u^4+2v^4+2s^2w^4
	\end{equation*}
	
\end{remark}

\begin{proof}[Proof of theorem 1.3.]
	Considering the algebraic diagonal varieties as below:
	\begin{equation}\label{equ15}
		f(x,y)=\sum_{i=1}^{m} k_i {x_i}^4
	\end{equation}
	in which $f(x,y)$ is chosen from one of the expressions of equations \ref{equ36}. It is easy to select the coefficients $k_is$ such that equation \eqref{equ15} becomes symmetric. According to \cite{2} there is a rational parametric solution for the symmetric diagonal varieties. Using Theorem \ref{th2} the proof is completed.
	
\end{proof}

\begin{proof}[Proof of theorem 1.4.]
	without loss of generality, every rational parametric solution of equation \ref{equ31} can be expressed in the form of polynomials with integer coefficients. Thus, we may consider each of the $F_i(x,y)$ as a congruent number. Using the equation \ref{equ21} from Proposition \ref{pro3}, we have : 
	\begin{equation}\label{equ33}
		2n^2= (\frac{x}{w})^4+ (\frac{y}{w})^4-2(\frac{z}{w})^4
	\end{equation} 
	also from Lemma \ref{lem1} 
	\begin{equation}\label{equ34}
		2n^2= (\frac{x}{w})^4+ (\frac{y}{w})^4-(\frac{z}{w})^4
	\end{equation} 
	Through substituting $F_i(x_i,y_i)$ instead of $n$ in equations \ref{equ33} or \ref{equ34} and by equating them, the rational parametric solution for the Diophantine chain \ref{equ32} is obtained.
	
\end{proof}

\begin{corol}Each of the following Diophantine equations has a rational parametric  solution: 
	\begin{equation*}
		\begin{split}
			x^4+y^4+2z^4 &= u^4+v^4+2w^4\\
			x^4+y^4+z^4 &= u^4+v^4+w^4\\
			x^4+y^4+z^4 &= u^4+v^4+2w^4
		\end{split}
	\end{equation*}

\end{corol}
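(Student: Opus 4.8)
The plan is to obtain all three equations as instances of Theorem~\ref{th4} with $m=2$, fed by a single explicit two-term chain whose parametric solution is already in hand. First I would take the surface $x^4+4y^4=z^4-w^4$ of equation~\eqref{equ29}, which by Remark~\ref{rem2} (the case $s=1$ of Lemma~\ref{lem2}) carries the rational parametric solution $x=2t^3$, $y=2t$, $z=t^4+2$, $w=t^4-2$. Reading the two sides as $F_1(x,y)=x^4+4y^4$ and $F_2(z,w)=z^4-w^4$ exhibits this as a rationally parametrized Diophantine chain $F_1(x_1,y_1)=F_2(x_2,y_2)$ in which both $F_i$ lie among the three admissible forms $a^4+4b^4,\ a^4-b^4,\ 2a^4+2b^4$. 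With the witnesses $a=2t^3,\ b=2t$ for $F_1$ and $a=t^4+2,\ b=t^4-2$ for $F_2$, Proposition~\ref{pro2} makes each side a congruent number $n=n(t)$ parametrically, and the two sides coincide precisely because the chain holds.

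Next I would invoke Theorem~\ref{th4}, which turns this $F$-chain into a chain $\phi_1(x_1,y_1,z_1)=\phi_2(x_2,y_2,z_2)$ in which each $\phi_i$ may be taken independently to be either $a^4+b^4-2c^4$ (built from Proposition~\ref{pro3}) or $a^4+b^4-c^4$ (built from Lemma~\ref{lem1}); in the mechanism of that proof both versions evaluate to the common value $2n^2$ through \eqref{equ33}--\eqref{equ34}, which is what forces $\phi_1=\phi_2$. The three displayed equations then correspond to the three choices of the pair $(\phi_1,\phi_2)$: taking both equal to $a^4+b^4-2c^4$ and transposing the lone negative term on each side gives $x^4+y^4+2z^4=u^4+v^4+2w^4$; taking both equal to $a^4+b^4-c^4$ gives $x^4+y^4+z^4=u^4+v^4+w^4$; and the mixed choice $(\phi_1,\phi_2)=(a^4+b^4-c^4,\ a^4+b^4-2c^4)$ gives $x^4+y^4+z^4=u^4+v^4+2w^4$ after relabeling. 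In each case no fresh computation is needed once Theorem~\ref{th4} has been applied: the rearrangement merely moves the single negative-coefficient monomial across the equality.

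The one point demanding care---rather than a genuine obstacle---is nondegeneracy. I must check that the data produced by Proposition~\ref{pro3} and Lemma~\ref{lem1} are nonconstant in $t$ and that the normalizing variable $w$ there is not identically zero, so that $(x/w)^4,\ (y/w)^4,\ (z/w)^4$ are honest rational functions; and, for the two-sided equations, that the $F_1$- and $F_2$-witnesses yield distinct points on $v^2=u^3-n^2u$, so that the solution is not the trivial one with matching terms. Both are immediate here, since $z^2-w^2=(t^4+2)^2-(t^4-2)^2=8t^4\not\equiv0$ forces $n=F_1(x,y)$ to be a nonzero nonconstant rational function of $t$ and the two witnesses are manifestly different polynomials in $t$. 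With this verified, each of the three equations inherits an explicit rational parametric solution in $t$, completing the proof.
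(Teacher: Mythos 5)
Your proposal is correct and follows exactly the route of the paper, whose entire proof is the single sentence that the corollary is a direct consequence of Lemma~\ref{lem2} and Theorem~\ref{th4}; you have simply made explicit the intended instantiation (the $s=1$ chain $x^4+4y^4=z^4-w^4$ fed into Theorem~\ref{th4} with the three choices of $(\phi_1,\phi_2)$). The nondegeneracy check you add is a sensible detail the paper omits, but it does not change the argument.
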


\begin{proof}
	The proof is the direct result of Lemma \ref{lem2} and Theorem \ref{th4}.  
\end{proof}

\section{Closing Comments}

Through applying Proposition \ref{pro3} and Lemma \ref{lem1} it is concluded that for any positive integer like $m$ each of the following symmetric Diophantine chains has infinitely many rational solutions
\begin{equation}\label{equ23}
	\phi(x_1,y_1,z_1) =\phi(x_2,y_2,z_2)= \dots =\phi(x_m,y_m,z_m)
\end{equation}
\begin{equation}\label{equ24}
	\psi(x_1,y_1,z_1) =\psi(x_2,y_2,z_2)= \dots =\psi(x_m,y_m,z_m)
\end{equation}
in which 
\begin{equation*}
	\begin{split}
		\phi(x,y,z) & = x^4+y^4-2z^4\\
		\psi(x,y,z) & = x^4+y^4-z^4\\
	\end{split}
\end{equation*}

Lets consider the rational parametric solution of the equations \ref{equ23} and \ref{equ24}. Suppose that the following Diophantine chain has a rational parametric solution
\begin{equation}\label{equ25}
	f(x_1,y_1)= f(x_2,y_2)= \dots = f(x_m,y_m)
\end{equation}
in which f(a,b) is equal to one the values:
\[
a^4+4b^4 ,\ \qquad 2a^4+2b^4 ,\ \qquad a^4-b^4,
\]
Consequently, according to Theorem \ref{th4}, there exists a rational parametric solution for the Diophantine chains \ref{equ23} and \ref{equ24}. But the challenge is to find a rational parametric solution for the Diophantine chain \ref{equ25}.

For the case $m=2$ using the Lemma \ref{lem2}, there exists a rational parametric solution for the Diophantine chain \ref{equ25}. But for the case $m=3$ the problem becomes tremendously difficult. For instance if $f(a,b)=2a^4+2b^4$, no rational parametric solution has been found so far. And for the case $f(a,b)=a^4-b^4$ there are only infinitely many rational points, and no rational parametric solution has been found yet \cite{3}.
Though there are infinitely many rational points for the Diophantine chains \ref{equ23} and \ref{equ24} and also there exists a rational parametric solution for the case $m=2$, the following conjecture is stated:\\

\textsc{Conjecture.} For any positive integer like $m$, the Diophantine chains of \ref{equ23} and \ref{equ24} have a rational parametric solution.

\end{document}